\tikzset{>=Latex}
\newcommand{\Ga}{\Gamma}
\newcommand{\Ocal}{\mathcal{O}}
\newcommand{\Cat}[1]{\mathbf{#1}}
\newcommand{\QCoh}{\Cat{QCoh}}
\newcommand{\Perf}{\Cat{Perf}}
\newcommand{\SpecGnC}[1]{\mathrm{Spec}^{\mathrm{nc}}_{\Ga}\!\bigl(#1\bigr)}
\newcommand{\nTGMod}[1]{#1\text{-}\Ga\mathrm{Mod}}
\newcommand{\Ccal}{\mathcal{C}}
\newcommand{\Acal}{\mathcal{A}}
\newcommand{\Dcal}{\mathcal{D}}
\newcommand{\cof}{\mathsf{cof}}        
\newcommand{\weq}{\mathsf{weq}}        
\newcommand{\K}{\mathrm{K}}
\newcommand{\KQ}{\mathrm{K}^{\mathrm{Q}}}      
\newcommand{\KW}{\mathrm{K}^{\mathrm{W}}}      
\newcommand{\Nil}{\mathrm{Nil}}
\newcommand{\Kar}{\mathrm{Kar}}                
\theoremstyle{plain}
\newtheorem{theorem}{Theorem}[section]
\newtheorem{proposition}[theorem]{Proposition}
\newtheorem{corollary}[theorem]{Corollary}
\theoremstyle{definition}
\newtheorem{definition}[theorem]{Definition}
\newtheorem{example}[theorem]{Example}
\theoremstyle{remark}
\newtheorem{remark}[theorem]{Remark}
\title{\textbf{\MakeUppercase{Fundamental Theorems in the K-Theory of $\Gamma$-Semirings: Additivity, Localization, and D\'evissage}}}
\author{%
Chandrasekhar Gokavarapu\thanks{Corresponding author. Email: chandrasekhargokavarapu@gmail.com; Phone: +91-9666664242.}\\
\small Lecturer in Mathematics, Government College(A),
\small Rajahmundry -- 533105, A.P., India and\\
\small Research Scholar, Department of Mathematics, Acharya Nagarjuna University, Guntur, A.P., India
}
\date{} 
\begin{document}


\maketitle

\noindent\textbf{Abstract.}
Building on the Waldhausen and Quillen models of higher algebraic $K$-theory
for exact categories and Waldhausen categories attached to a non-commutative $n$-ary
$\Ga$-semiring $(T,\Ga)$, we establish the fundamental formal properties of
$K$-theory in this $\Ga$-parametrised, slot-sensitive setting. For the
exact/Waldhausen categories of finitely generated bi-positional $n$-ary
$\Ga$-modules, perfect complexes in the derived category, and perfect
quasi-coherent complexes on the non-commutative $\Ga$-spectrum
$\SpecGnC{T}$, we prove Waldhausen Fibration and Additivity theorems and
Quillen-type Localization for Serre and Waldhausen pairs. Under natural
hypotheses on $\Ga$-stable filtrations we obtain d\'evissage and
Approximation theorems, together with cofinality and Karoubi invariance,
showing that idempotent completion does not change $K$-theory and that
cofinal subcategories control $K_n$ in positive degrees. We further derive
a Bass--Quillen fundamental triangle for polynomial extensions in the
$n$-ary $\Ga$-context and prove nilpotent invariance for two-sided
$\Ga$-ideals. In geometric terms, these results yield localization and
Mayer--Vietoris sequences for the $K$-theory of $\Perf(X)$ on
$X=\SpecGnC{T}$ and its admissible open covers. Altogether, the paper
shows that the higher $K$-theory of non-commutative $n$-ary $\Ga$-semirings
enjoys the same formal properties as in the classical ring and scheme
cases, providing a robust foundation for subsequent computational and
homotopy-theoretic applications.

\vspace{1em}

\noindent\textbf{Key-words:}
Algebraic $K$-theory; Waldhausen categories; Quillen $Q$-construction;
localization; d\'evissage; $\Ga$-semirings; non-commutative geometry.

\vspace{0.5em}

\noindent\textbf{AMS Subject Classification (2020):}
19D10, 19E08, 16Y60, 18G80.

\vspace{1.5em}

\section*{Author Affiliation}
\noindent
Chandrasekhar Gokavarapu, Lecturer in Mathematics, Government College(A)\\
Rajahmundry -- 533105, A.P., India and Research Scholar, Department of Mathematics, Acharya Nagarjuna University, Guntur, A.P., India\\
Email:  chandrasekhargokavarapu@gmail.com, Phone: +91-9666664242

\section{Introduction}

Algebraic $K$-theory has become one of the central invariants in modern
algebra, geometry, and topology. Since the foundational work of Quillen \cite{Quillen73}
and Waldhausen \cite{Waldhausen85}, the basic formal properties of $K$-theory---additivity,
localization, d\'evissage, approximation, cofinality, and nilpotent
invariance---have played a decisive role in applications ranging from
algebraic geometry and number theory to non-commutative geometry and
topological cyclic homology \cite{Bass68, Weibel13}.

In recent years, $\Ga$-semiring structures and their higher-arity
generalisations have emerged as a flexible framework for encoding
multi-parameter algebraic operations and slot-sensitive interaction
rules \cite{Nobusawa64, Barnes66, Sen81, Golan99}. For non-commutative $n$-ary $\Ga$-semirings $(T,\Ga)$ one can
attach several natural exact and Waldhausen categories: finitely
generated bi-positional $\Ga$-modules, perfect complexes in the derived
category, and perfect quasi-coherent objects on the associated
non-commutative $\Ga$-spectrum $\SpecGnC{T}$. In a companion work \cite{Gokavarapu1}, we
constructed Quillen and Waldhausen models for the higher $K$-theory of
these categories and showed that the resulting spectra agree.

The purpose of the present paper is to establish the analogue, in this
non-commutative $n$-ary $\Ga$-setting, of the classical ``fundamental
theorems'' of algebraic $K$-theory. More precisely, for the exact and
Waldhausen categories naturally attached to $(T,\Ga)$ we prove:

\begin{itemize}
  \item Waldhausen Fibration and Additivity theorems, yielding fiber
        sequences of $K$-theory spectra and additivity for cofibration
        sequences;
  \item Quillen-type Localization for Serre and Waldhausen pairs, both
        in abstract exact categories and in geometric form over
        $\SpecGnC{T}$;
  \item D\'evissage and Approximation theorems for suitable
        $\Ga$-stable filtrations and exact functors;
  \item Cofinality and Karoubi invariance results, ensuring that
        idempotent completion does not change $K$-theory and that
        cofinal subcategories control $K_n$ in positive degrees;
  \item Bass--Quillen fundamental triangle for polynomial extensions
        and nilpotent invariance for $\Ga$-ideals.
\end{itemize}

Taken together, these results show that higher $K$-theory in the
$n$-ary $\Ga$-semiring context enjoys the same formal functorial
properties as in the classical ring and scheme settings. In
particular, the $K$-theory of $\SpecGnC{T}$ satisfies localization,
Mayer--Vietoris, and excision, and is insensitive to nilpotent
$\Ga$-ideals.

\medskip

\noindent
\textbf{Structure of the paper.}
In Section~\ref{sec:prelim} we recall the basic setup: non-commutative
$n$-ary $\Ga$-semirings, the associated exact and Waldhausen categories,
and the conventions on cofibrations, weak equivalences, and cylinders
used throughout. In Section~\ref{sec:localization} we develop the core
$K$-theoretic formalism for these categories: Waldhausen fibration and
additivity, Quillen localization, d\'evissage, approximation,
cofinality, Bass--Quillen triangles, nilpotent invariance, and
localization over $\SpecGnC{T}$. The paper concludes with a short
summary and outlook, highlighting how these results will be used in
subsequent work on concrete computations, derived Morita invariance,
and assembly maps towards a $\Ga$-parametrised Farrell--Jones paradigm.

\section{Preliminaries}
\label{sec:prelim}

In this section we collect the standing assumptions, basic definitions,
and categorical conventions used in the sequel. The reader is assumed
to be familiar with the language of exact and Waldhausen categories and
with the general machinery of higher algebraic $K$-theory.

\subsection{Non-commutative $n$-ary $\Ga$-semirings}

Throughout the paper, $(T,\Ga)$ denotes a fixed non-commutative $n$-ary
$\Ga$-semiring \cite{Dudek08, Hebisch98}. Concretely, $T$ is a (possibly non-commutative) additive
semigroup, $\Ga$ is a parameter set indexing the slots, and
\[
\mu \;:\; T^{\times n} \times \Ga \longrightarrow T
\]
is an $n$-ary ``multiplication'' which is additive in each argument and
compatible with the $\Ga$-parameters in the usual sense. We do not
repeat the full list of axioms here; the only points used in the sequel
are:

\begin{itemize}
  \item the $n$-ary product is associative up to the specified $\Ga$-rules;
  \item unit and distributivity laws hold in each positional slot;
  \item there is a notion of two-sided $\Ga$-ideal and $\Ga$-localisation.
\end{itemize}

A (left) $n$-ary $\Ga$-module over $T$ is an additive group $M$
equipped with compatible positional actions of the form
\[
\mu_M \;:\; T^{\times (j-1)} \times M \times T^{\times (n-j)} \times \Ga
\longrightarrow M,
\]
for $1\leq j\leq n$, satisfying the obvious associativity and
distributivity conditions. We write $\nTGMod{T}$ for the category of
such modules, and $\nTGMod{T}^{\mathrm{bi}}$ for the corresponding
bi-positional or bimodule variant when both left and right actions are
present.

\subsection{Exact and Waldhausen categories attached to $(T,\Ga)$}

To the pair $(T,\Ga)$ we attach several exact/Waldhausen categories
which will serve as models for $K$-theory \cite{Weibel13, Srinivas96}:

\begin{itemize}
  \item $\nTGMod{T}^{\mathrm{bi}}_{\mathrm{fp}}$: the category of
        finitely generated bi-positional $n$-ary $\Ga$-modules over $T$,
        endowed with the usual exact structure given by short exact
        sequences of underlying additive groups;

  \item $\Perf\!\big(\mathbf{D}(\nTGMod{T}^{\mathrm{bi}})\big)$:
        the full subcategory of compact/ perfect objects in the derived
        category of $\nTGMod{T}^{\mathrm{bi}}$; here weak equivalences
        are quasi-isomorphisms and cofibrations are degreewise split
        monomorphisms with cofibrant cokernel;

  \item $\QCoh(\SpecGnC{T})_{\mathrm{perf}}$:
        the category of perfect complexes of quasi-coherent
        $\Ocal_{\SpecGnC{T}}$-modules on the non-commutative
        $\Ga$-spectrum $\SpecGnC{T}$, again with cofibrations taken to
        be degreewise split monomorphisms and weak equivalences
        quasi-isomorphisms.
\end{itemize}

Each of these carries the structure of a Waldhausen category
$(\Ccal,\cof,\weq)$: cofibrations are stable under pushout, weak
equivalences satisfy the $2$-out-of-$3$ property, and cylinder
functors exist and are compatible with the positional $n$-ary product
$\mu$. The cone and cylinder constructions are chosen so that mapping
cones of weak equivalences are weakly contractible and so that the
positional tensor products preserve cofibrations and weak equivalences.

\subsection{Higher $K$-theory conventions}

Given a Waldhausen category $\Ccal$, we denote by $\KW(\Ccal)$ its
Waldhausen $K$-theory spectrum \cite{Waldhausen85} obtained from the $S_\bullet$-construction,
and by $\KQ(\Ccal)$ the Quillen $K$-theory spectrum \cite{Quillen73} defined via the
$Q$-construction when $\Ccal$ comes from an exact category. The
associated $K$-groups are
\[
K_n(\Ccal) \;=\; \pi_n \KW(\Ccal) \;\cong\; \pi_n \KQ(\Ccal)
\qquad (n\ge 0)
\]
whenever the comparison map $\KQ(\Ccal)\to\KW(\Ccal)$ is a weak
equivalence.

In the non-commutative $n$-ary $\Ga$-semiring setting we work
systematically with exact/Waldhausen models that are stable under the
slot-sensitive tensor products induced by $\mu$ and under the relevant
localisations and completions. This allows us to apply the classical
Waldhausen and Quillen theorems once we verify the axioms in this
context.

The standing notation in Section~\ref{sec:localization} is therefore as
follows: $\Ccal$ will always denote one of the exact/Waldhausen
categories listed above, endowed with the cofibrations, weak
equivalences, and cylinder functors fixed here; subcategories such as
$\Acal$ will be assumed strictly full and stable under the
positional operations whenever required.

\section{Exact and Localization Sequences}
\label{sec:localization}

Throughout this section, let $\Ccal$ be one of the exact Waldhausen models attached to the
non-commutative $n$-ary $\Ga$-semiring $(T,+,\Ga,\mu)$ constructed in Section~\ref{sec:prelim}:
\[
\Ccal \in \Big\{
   {\nTGMod{T}}^{\mathrm{bi}}_{\mathrm{fp}},
   \ \Perf\!\big(\mathbf{D}({\nTGMod{T}}^{\mathrm{bi}})\big),
   \ \QCoh(\SpecGnC{T})_{\mathrm{perf}}
\Big\},
\]
equipped with cofibrations, weak equivalences, and cylinders as in Section~\ref{sec:prelim}
(Quillen $Q$ and Waldhausen $S_\bullet$ models are Quillen-equivalent by
Theorem  therein \cite{Gokavarapu1}).
Our goal is to establish \emph{localization} and \emph{fibration} theorems yielding long exact
sequences in $\K$-theory that are intrinsic to the $\Ga$-parametrized, slot-sensitive,
non-commutative $n$-ary setting.

\subsection{Exact pairs and Serre--Waldhausen subcategories}
\label{subsec:exact-pairs}

\begin{definition}[Exact/Waldhausen pair]
An \emph{exact pair} $(\Ccal,\Acal)$ consists of an exact category $\Ccal$
and a strictly full exact subcategory $\Acal\subset \Ccal$ closed under extensions and
retracts. In Waldhausen form, $(\Ccal,\Acal)$ is a Waldhausen category $\Ccal$
with cofibrations and weak equivalences, and a strictly full subcategory
$\Acal\subset\Ccal$ such that:
\begin{enumerate}[label=(E\arabic*)]
  \item $\Acal$ is closed under cofibrations and weak equivalences in $\Ccal$,
  \item for any cofibration sequence $A\hookrightarrow X \twoheadrightarrow X/A$
        with $A\in \Acal$, we have $X\in\Acal$ if and only if $X/A\in\Acal$.
\end{enumerate}
\end{definition}

\begin{example}[Nilpotent and torsion parts]
Let $\Acal=\Nil_\Ga(\Ccal)$ be the full subcategory of objects $X$ for which some iterate
of the $\Ga$-linear endomorphism induced by a fixed $\alpha\in\Ga$ acts nilpotently on
each positional slot (Section~\ref{sec:prelim}).
Alternatively, let $\Acal$ be the subcategory of objects whose support lies in a closed
subset $Z\subset\SpecGnC{T}$; then $(\Ccal,\Acal)$ is exact/Waldhausen by stalkwise checks.
\end{example}

\subsection{Waldhausen fibration and additivity}
\label{subsec:waldhausen-fibration}

\begin{theorem}[Waldhausen Fibration]
\label{thm:waldhausen-fibration}
Let $(\Ccal,\Acal)$ be a Waldhausen pair as above, and write $\Ccal/\Acal$ for the
Verdier quotient model (constructed by $S_\bullet$ with weak equivalences inverted
).
Then there is a homotopy fiber sequence of connective spectra
\[
\KW(\Acal)\ \longrightarrow\ \KW(\Ccal)\ \longrightarrow\ \KW(\Ccal/\Acal),
\]
which yields a natural long exact sequence in $\K$-groups:
\[
\cdots \to K_{n+1}(\Ccal/\Acal)\ \to\ K_n(\Acal)\ \to\ K_n(\Ccal)\ \to\ K_n(\Ccal/\Acal)\ \to\ \cdots.
\]
This construction is functorial for exact functors preserving cofibrations and weak equivalences,
and compatible with the slot-indexed tensor and cone structures induced by the $n$-ary product $\mu$ \cite{Waldhausen85}.
\end{theorem}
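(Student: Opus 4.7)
The strategy is to reduce the statement to Waldhausen's classical Fibration Theorem \cite{Waldhausen85} by equipping $\Ccal$ with two compatible classes of weak equivalences and then identifying the resulting $K$-theory spectra with those in the claim. I would let $v\Ccal \defeq \weq$ be the given weak equivalences of $\Ccal$ and introduce a coarser class $w\Ccal$ consisting of the morphisms whose homotopy cofibre lies in $\Acal$; equivalently, $w\Ccal$ is the saturation of $v\Ccal$ under cofibrations with cofibre in $\Acal$. By axiom (E2), the subcategory $\Ccal^{w}$ of $w$-acyclic objects coincides with $\Acal$, and the Verdier-type quotient $\Ccal/\Acal$ fixed in Section~\ref{sec:prelim} is by construction the $S_\bullet$-model for $(\Ccal, w\Ccal)$. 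Granting these identifications, Waldhausen's theorem gives a homotopy fibre sequence
\[
\KW(\Ccal^{w}, v) \longrightarrow \KW(\Ccal, v) \longrightarrow \KW(\Ccal, w),
\]
which, under the identifications above, is exactly the sequence asserted.

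\textbf{Axiom verification.} The bulk of the work is to check that the pair $(v\Ccal, w\Ccal)$ satisfies Waldhausen's hypotheses in each of the three concrete models for $\Ccal$. The relevant items are: (i) the saturation axiom for $w\Ccal$, which is automatic from the two-out-of-three property of $v\Ccal$ together with (E1)--(E2); (ii) the extension axiom, which reduces to the closure of $\Acal$ under extensions applied to a map of cofibration sequences; (iii) the cylinder axiom, requiring that the cylinder functor fixed in Section~\ref{sec:prelim} satisfies the cylinder axiom for both $v\Ccal$ and $w\Ccal$. Items (i)--(ii) are formal consequences of the exact/Waldhausen pair axioms. Item (iii) is where the $\Ga$-parametrised, slot-sensitive setting intervenes: the cylinder functors are defined using the positional $n$-ary product $\mu$, and I must check that they preserve $w$-equivalences and produce $w$-acyclic mapping cones of $w$-equivalences. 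In $\nTGMod{T}^{\mathrm{bi}}_{\mathrm{fp}}$ and $\Perf(\Dcat(\nTGMod{T}^{\mathrm{bi}}))$ this follows from the stability of $\Acal$ under the positional $\mu$-action; in $\QCoh(\SpecGnC{T})_{\mathrm{perf}}$ one reduces to a stalkwise check on $\Ga$-admissible opens.

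\textbf{Identification of the quotient and conclusion.} The identification $\KW(\Ccal^{w}, v) \simeq \KW(\Acal)$ is immediate from $\Ccal^{w} = \Acal$ together with the inherited Waldhausen structure. The identification $\KW(\Ccal, w) \simeq \KW(\Ccal/\Acal)$ compares the $S_\bullet$-construction with enlarged weak equivalences to the explicit Verdier-type quotient; this is handled by a Waldhausen Approximation Theorem style argument, using the cylinder functor and the fact that every object of $\Ccal/\Acal$ admits a presentation by cofibration sequences with cofibre in $\Acal$, so that both models realise the same universal property. The long exact sequence in $K$-groups is then the usual consequence of the fibre sequence, and functoriality in exact, slot-preserving functors is automatic since every construction used (cofibres, cylinders, $S_\bullet$) is natural in such functors. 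The main obstacle I anticipate is item (iii) in the geometric model $\QCoh(\SpecGnC{T})_{\mathrm{perf}}$: one must exhibit a cylinder functor that simultaneously respects the $\Ga$-parametrised structure, the Verdier quotient, and perfectness, which I would construct by sheafifying the mapping cylinder from the derived category of $\Ga$-modules and verifying perfectness fibrewise on $\Ga$-admissible opens.
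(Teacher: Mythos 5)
Your proposal takes essentially the same route as the paper: both reduce to Waldhausen's Fibration Theorem by regarding $\Acal$ as the acyclics for a coarser class of weak equivalences and then reading off the fibre sequence from the $S_\bullet$-construction. Where you differ is in precision rather than strategy: you make the $(v\Ccal,w\Ccal)$ two-weak-equivalence framework explicit (defining $w\Ccal$ as the maps with cofibre in $\Acal$, identifying $\Ccal^w=\Acal$, and separating out the saturation, extension, and cylinder axiom checks), whereas the paper's sketch simply asserts that (E1)--(E2) and the positional cylinder constructions give what Waldhausen's theorem needs. Your version is the more faithful rendering of Waldhausen's actual hypotheses and correctly flags the cylinder axiom for $w\Ccal$ in the geometric model $\QCoh(\SpecGnC{T})_{\mathrm{perf}}$ as the only non-formal point. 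One small internal tension worth resolving: you first declare $\Ccal/\Acal$ to be ``by construction'' the $S_\bullet$-model for $(\Ccal,w\Ccal)$, but later invoke an Approximation-style argument to compare $\KW(\Ccal,w)$ with $\KW(\Ccal/\Acal)$; you should commit to one of these---either the quotient is \emph{defined} via $w\Ccal$ (in which case no comparison is needed) or it is an independently constructed Verdier quotient (in which case state precisely which Approximation hypotheses are being verified). The paper leaves this equally ambiguous, so this is a shared imprecision rather than a gap in your argument relative to the source.
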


\begin{proof}[Proof sketch]
Form the Waldhausen subcategory $S_\bullet(\Acal)\subset S_\bullet(\Ccal)$ and the quotient
$S_\bullet(\Ccal)/S_\bullet(\Acal)$ by inverting $\Acal$-equivalences (Section~\ref{sec:prelim}).
Waldhausen’s Fibration Theorem applies because (E1)–(E2) provide saturation/extension-closure,
and cylinder and gluing axioms hold by the mapping cylinder/positional cone constructions for $\mu$ \cite{Waldhausen85}.
The resulting simplicial fibration yields the claimed fiber sequence after geometric realization.
\end{proof}

\begin{theorem}[Waldhausen Additivity]
\label{thm:additivity}
Let $\mathrm{Seq}(\Ccal)$ be the Waldhausen category of cofibration sequences
$A\hookrightarrow B \twoheadrightarrow C$ in $\Ccal$.
Then the canonical map
\[
\KW(\mathrm{Seq}(\Ccal))\ \xrightarrow{\ \simeq\ }\ \KW(\Ccal)\times \KW(\Ccal),
\qquad
[A\!\hookrightarrow\! B \!\twoheadrightarrow\! C]\ \longmapsto\ ([A],[C]),
\]
is a weak equivalence of spectra. Consequently, $[B]=[A]+[C]$ in $\pi_0\KW(\Ccal)=K_0(\Ccal)$
and the same additivity propagates through all higher homotopy groups \cite{Waldhausen85}.
\end{theorem}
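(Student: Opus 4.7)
The plan is to reduce the claim to Waldhausen's classical Additivity Theorem, with the substantive work being verification that the slot-sensitive $\Ga$-structure is preserved at every step. First I would record the two natural exact functors $s, q \colon \mathrm{Seq}(\Ccal) \to \Ccal$ with $s(A \hookrightarrow B \twoheadrightarrow C) = A$ and $q(A \hookrightarrow B \twoheadrightarrow C) = C$. Both visibly preserve cofibrations, weak equivalences, the cylinder functor, and the positional $\mu$-actions, since these structures are read off componentwise from the three terms of the sequence. Together $(s, q)$ is the map of spectra whose equivalence is to be established.

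Next I would construct a candidate homotopy inverse $\iota \colon \Ccal \times \Ccal \to \mathrm{Seq}(\Ccal)$ sending $(A, C) \mapsto (A \hookrightarrow A \oplus C \twoheadrightarrow C)$, where $A \oplus C$ carries the direct-sum $\Ga$-action in every positional slot. Exactness and $\Ga$-compatibility of $\iota$ are immediate in each of the three models of $\Ccal$ listed at the start of the section. The identity $(s, q) \circ \iota = \id_{\Ccal \times \Ccal}$ is tautological; the substantive content is that the composite $\iota \circ (s, q)$ induces a map homotopic to the identity on $\KW(\mathrm{Seq}(\Ccal))$.

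For the latter I would invoke Waldhausen's simplicial argument on $S_\bullet(\mathrm{Seq}(\Ccal))$. The endofunctors of $\mathrm{Seq}(\Ccal)$ fit into a natural cofibration sequence
\begin{equation*}
    \iota \circ s \;\hookrightarrow\; \Id_{\mathrm{Seq}(\Ccal)} \;\twoheadrightarrow\; \iota' \circ q,
\end{equation*}
where $\iota'(C) = (0 \hookrightarrow C \twoheadrightarrow C)$ and the middle arrow is the tautological inclusion $A \hookrightarrow B$. Waldhausen's Additivity Lemma for cofibration sequences of exact endofunctors---proved by a direct inspection on $S_\bullet$ using the cylinder axioms---then yields $(\Id)_* \simeq (\iota \circ s)_* + (\iota' \circ q)_*$ on $\KW$. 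A short rearrangement, absorbing the zero slot of $\iota'$ into $\iota$, identifies the right-hand side with $(\iota \circ (s, q))_*$, giving the sought-after homotopy inverse.

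\textbf{Main obstacle.} The crux, and the only place where the $\Ga$-setting introduces new content, is verifying that the displayed sequence of endofunctors is a cofibration sequence in the slot-sensitive Waldhausen structure of $\mathrm{Seq}(\Ccal)$, not merely at the level of underlying additive groups. Concretely, one must check that the filtration $A \hookrightarrow B$ is stable under every positional $\mu$-slot, that the induced action on the quotient $B/A$ coincides with that on $C$ slot by slot, and that these properties persist under the cylinder and mapping-cone constructions used to build the $S_\bullet$-morphisms. This rests on the extension-closure axiom (E2), on the compatibility of cylinders with $\mu$ fixed in Section~\ref{sec:prelim}, and---in the three specific models---on standard facts about direct sums of bi-positional $\Ga$-modules, of perfect complexes in $\Perf(\Dcat(\nTGMod{T}^{\mathrm{bi}}))$, and of perfect quasi-coherent sheaves on $\SpecGnC{T}$. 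Once this $\Ga$-naturality is in hand, the remaining steps are literal transfers of Waldhausen's argument, and the statement $[B] = [A] + [C]$ together with higher-degree additivity follows by taking $\pi_*$.
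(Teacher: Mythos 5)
Your proposal is correct and ultimately rests on the same reduction as the paper: verify that the $\Gamma$-compatible Waldhausen axioms (cofibrations, cylinders, positional cones and direct sums) hold in the three models, then defer to Waldhausen's classical Additivity Theorem. Where you differ is in granularity: the paper's ``Idea'' is a one-line citation, whereas you unpack the internal mechanics --- the projections $s,q$, the section $\iota(A,C)=(A\hookrightarrow A\oplus C\twoheadrightarrow C)$, the retraction identity $(s,q)\circ\iota=\id$, and the endofunctor cofibration sequence $\iota s\hookrightarrow\Id\twoheadrightarrow\iota' q$ on $\mathrm{Seq}(\Ccal)$. One logical caution: invoking ``Waldhausen's Additivity Lemma for cofibration sequences of exact endofunctors'' to conclude $(\Id)_*\simeq(\iota s)_*+(\iota' q)_*$ risks circularity, since in most treatments that functorial form is \emph{deduced from} the $\mathrm{Seq}(\Ccal)$-version you are trying to prove. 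To make the argument self-contained you would need to flag that the ``direct inspection on $S_\bullet$'' you allude to is a genuinely independent simplicial argument (Waldhausen's Theorem~A reduction, or McCarthy's simplicial homotopy), rather than a restatement of the theorem itself; alternatively, simply cite the $\mathrm{Seq}$-form of additivity outright, as the paper does. Your identification of the $\Gamma$-specific content --- that filtration, quotient, cylinder, and cone must all respect the slot-indexed $\mu$-actions, not merely the underlying abelian-group structure --- is exactly right and matches the paper's emphasis on ``exactness of positional cones.''
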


\begin{proof}[Idea]
Apply Waldhausen’s Additivity Theorem to $S_\bullet(\Ccal)$; the required pushout/pullback and
cylinder axioms hold in our $\Ga$-semiring context by exactness of positional cones (Section~\ref{sec:prelim}).
\end{proof}

\begin{corollary}[Agreement $\KQ\simeq\KW$]
\label{cor:Q=W-localization}
For the exact categories listed at the beginning of this section (finite, idempotent-complete,
and with cylinders), the canonical comparison map of Section~\ref{sec:prelim}
induces a spectrum equivalence $\KQ(\Ccal)\simeq \KW(\Ccal)$, compatible with
Theorems~\ref{thm:waldhausen-fibration}–\ref{thm:additivity}. Hence all localization statements below
hold equally for Quillen’s $Q$-construction and Waldhausen’s $S_\bullet$-model.
\end{corollary}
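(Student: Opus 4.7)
The plan is to reduce this to the classical Gillet--Waldhausen comparison \cite{Waldhausen85}, which identifies $\KQ$ and $\KW$ on exact categories equipped with sufficiently flexible cylinder functors and the usual closure properties. The canonical comparison map was built in Section~\ref{sec:prelim} via the standard path-space argument $\Omega|wS_\bullet\Ccal|\simeq |Q\Ccal|$; what remains is to verify that its hypotheses survive for each of the three models $\nTGMod{T}^{\mathrm{bi}}_{\mathrm{fp}}$, $\Perf\!\big(\Dcat(\nTGMod{T}^{\mathrm{bi}})\big)$, and $\QCoh(\SpecGnC{T})_{\mathrm{perf}}$ in the presence of the $\Ga$-parametrised, slot-sensitive structure.

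First I would handle the module case $\Ccal=\nTGMod{T}^{\mathrm{bi}}_{\mathrm{fp}}$, where weak equivalences are isomorphisms and cofibrations are admissible monomorphisms. Here Waldhausen's original argument applies verbatim once we record that $\Ccal$ is small, idempotent-complete, closed under extensions and retracts, and endowed with a cylinder functor compatible with the positional product $\mu$. The first three properties are explicit standing hypotheses of the corollary; the cylinder condition was built into the conventions of Section~\ref{sec:prelim}, so no new input is needed.

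For the two derived models I would invoke the complex-theoretic variant: the inclusion of the underlying exact category of perfect (respectively finitely generated projective $\Ga$-) objects into its bounded chain-complex category with quasi-isomorphisms inverted induces equivalences on both $\KQ$ and $\KW$ -- by Quillen's resolution theorem on the $Q$-side and by Waldhausen's approximation argument on the $S_\bullet$-side. Combined with the module case this yields $\KQ(\Ccal)\simeq\KW(\Ccal)$ for the derived and quasi-coherent settings. Idempotent completeness is automatic for perfect complexes, and the mapping-cylinder functor extends $\mu$-equivariantly as fixed in Section~\ref{sec:prelim}.

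Compatibility with Theorems~\ref{thm:waldhausen-fibration} and~\ref{thm:additivity} is then formal: the comparison $\KQ\to\KW$ is natural in exact functors preserving cofibrations and weak equivalences, and both sides convert Waldhausen pairs to fiber sequences and cofibration sequences to direct-sum maps; passing to long exact sequences in $\K$-groups yields commuting ladders whose vertical arrows are isomorphisms. The main obstacle I expect is the cylinder/gluing axiom in the derived and quasi-coherent models: one must confirm that the mapping cylinders chosen in Section~\ref{sec:prelim} interact coherently with quasi-isomorphisms \emph{and} with the positional tensor products, so that Waldhausen's gluing lemma applies unchanged and the resulting equivalence respects the slot-sensitive $\Ga$-structure. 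Once this coherence is pinned down, the corollary reduces to invoking the classical equivalences.
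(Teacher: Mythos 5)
The paper offers no proof of this corollary: it is stated without an accompanying proof environment, and the opening of Section~\ref{sec:localization} simply cites the companion work \cite{Gokavarapu1} for the Quillen--Waldhausen comparison. Your sketch is therefore a reconstruction rather than a match to a written argument, but it reconstructs the intended Gillet--Waldhausen route plausibly: the $|Q\Ccal|\simeq\Omega|wS_\bullet\Ccal|$ path-space comparison for the underlying exact module category, followed by reduction of the two complex-theoretic models to that case via resolution and approximation, which is the standard Thomason--Trobaugh mechanism.

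One imprecision worth flagging. For $\nTGMod{T}^{\mathrm{bi}}_{\mathrm{fp}}$ regarded as a Waldhausen category with isomorphisms as weak equivalences, the comparison $\KQ(\Ccal)\simeq\KW(\Ccal)$ is Waldhausen's Theorem~1.9 and holds for \emph{any} small exact category; idempotent completeness and a cylinder functor are not needed at that stage, so listing them among what must be ``recorded'' for the module case overstates the hypotheses. Those conditions become genuinely load-bearing only in the second step, where you identify $\KQ$ of the strictly perfect objects with $\KW$ of the full perfect-complex model --- and your argument does correctly place the real content there. A second point to be candid about: the reduction for $\QCoh(\SpecGnC{T})_{\mathrm{perf}}$ presupposes that one has a well-behaved exact subcategory of strictly perfect (or locally free) objects cofinal in $\Perf$, which in the $\Ga$-semiring setting is a nontrivial input imported from \cite{Gokavarapu1,Gokavarapu3}; your proof silently assumes this rather than naming it. The formal compatibility with Theorems~\ref{thm:waldhausen-fibration}--\ref{thm:additivity} via naturality is fine as stated.
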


\subsection{Quillen localization for exact functors}
\label{subsec:quillen-localization}

\begin{theorem}[Quillen Localization for Exact Categories]
\label{thm:quillen-localization}
Let $F:\Acal\hookrightarrow \Ccal$ be a fully faithful exact embedding with
$\Acal$ a Serre subcategory of $\Ccal$ (closed under extensions, subobjects,
quotients, and retracts). Assume idempotent completeness (or pass to $\Kar(-)$).
Then there is a natural fiber sequence of spectra
\[
\KQ(\Acal)\ \longrightarrow\ \KQ(\Ccal)\ \longrightarrow\ \KQ(\Ccal/\Acal),
\]
and a long exact sequence on $K$-groups as in Theorem~\ref{thm:waldhausen-fibration} \cite{Quillen73, Schlichting06}.
\end{theorem}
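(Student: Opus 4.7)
The plan is to deduce Quillen localization from the Waldhausen Fibration Theorem (Theorem~\ref{thm:waldhausen-fibration}) via the model-comparison in Corollary~\ref{cor:Q=W-localization}, rather than re-running Quillen's Theorem B argument from scratch. The first step is to check that the Serre hypothesis on $\Acal\subset\Ccal$ promotes $(\Ccal,\Acal)$ to a Waldhausen pair in the sense of Definition~3.1. Closure under extensions and retracts is part of the Serre data, while closure under sub-objects and quotients gives axiom (E2): in a short exact sequence $A\into X\onto X/A$ with $A\in\Acal$, membership of $X$ in $\Acal$ is equivalent to membership of $X/A$ by the Serre property. Axiom (E1), closure under admissible monomorphisms and weak equivalences, then follows because these are built from the kernel/cokernel/extension data that $\Acal$ is already assumed to respect.

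The second step is to construct $\Ccal/\Acal$ as a genuine exact (and hence Waldhausen) category so that the two fibration theorems agree on it. I would use the classical Gabriel--Verdier calculus of fractions: the objects of $\Ccal/\Acal$ are those of $\Ccal$, and morphisms are equivalence classes of zig-zags $X\leftarrow X'\to Y$ whose backward leg has kernel and cokernel in $\Acal$. The Serre hypothesis makes the roofs compose unambiguously, and the admissible monomorphisms and epimorphisms of $\Ccal$ descend to a canonical exact structure on the quotient. In the $\Ga$-parametrized setting one additionally verifies that the positional $\mu$-actions and the cylinder functor fixed in Section~\ref{sec:prelim} descend: this uses that those constructions are exact in each slot and that $\Acal$ is $\Ga$-stable, so inverting $\Acal$-equivalences is compatible with $\mu$.

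The third step combines these inputs. Applying Theorem~\ref{thm:waldhausen-fibration} to the Waldhausen pair $(\Ccal,\Acal)$ produces the fiber sequence
\[
\KW(\Acal)\ \longrightarrow\ \KW(\Ccal)\ \longrightarrow\ \KW(\Ccal/\Acal).
\]
Under idempotent completeness (or after passage to $\Kar(-)$), Corollary~\ref{cor:Q=W-localization} supplies spectrum-level equivalences $\KQ(-)\simeq\KW(-)$ on all three terms, compatibly with the boundary maps. Transporting the fiber sequence to the Quillen side yields the asserted fibration and the long exact sequence of $K$-groups. For the non-idempotent-complete case, one invokes Schlichting's extension of Quillen localization, running the same argument inside the Karoubi envelope and using the cofinality theorem (to appear later in the paper) to see that the resulting defect is confined to $K_0$.

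The main obstacle I anticipate is step two: verifying that the Serre quotient inherits a coherent $n$-ary $\Ga$-structure compatible with the positional cylinder and cone functors. Unlike the classical ring or scheme case, $\Ccal/\Acal$ here lives in a slot-sensitive world in which different positions contribute inequivalent left/right actions, and one must check that the class of morphisms with $\Acal$-kernel and $\Acal$-cokernel in every slot is closed under the positional tensor product induced by $\mu$, so that both composition of fractions and the cylinder construction descend. Once this slot-compatibility has been secured, the rest of the argument is a formal reduction to Theorem~\ref{thm:waldhausen-fibration}.
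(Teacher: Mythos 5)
Your proof takes a genuinely different route from the paper. The paper's argument runs Quillen's Theorem~B directly on the $Q$-construction nerve of the exact embedding $F:\Acal\into\Ccal$, using Serre closure to show that exact sequences descend to $\Ccal/\Acal$ and that the induced map on nerves is a homotopy fibration; the agreement $\KQ\simeq\KW$ from Corollary~\ref{cor:Q=W-localization} is then invoked only \emph{afterwards}, to transport the result to the Waldhausen side. You invert this logic: you promote $(\Ccal,\Acal)$ to a Waldhausen pair, apply Theorem~\ref{thm:waldhausen-fibration}, and push the resulting fiber sequence back through the $\KQ\simeq\KW$ comparison. This is a reasonable alternative, and it has the merit of not re-deriving a Theorem~B argument that Waldhausen's machinery already packages, but it buys you less independence than the paper's route, since it makes Quillen localization a formal corollary of the Waldhausen theorem rather than an independent check that the two models agree.

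There is one point you should tighten. Theorem~\ref{thm:waldhausen-fibration} produces a fiber sequence whose cofiber term is the Waldhausen quotient, i.e.\ $\Ccal$ equipped with the coarser class of weak equivalences whose cone lies in $\Acal$, realized through the $S_\bullet$-construction. Your step two instead builds the Gabriel--Verdier Serre quotient as an exact category and silently identifies its $Q$-construction $K$-theory with the Waldhausen quotient's $K$-theory. That identification is not automatic: it is precisely the content of Gillet--Waldhausen/Schlichting-type comparisons, and Corollary~\ref{cor:Q=W-localization} as stated only asserts $\KQ\simeq\KW$ for the three ambient categories fixed at the start of Section~\ref{sec:localization}, not for an arbitrary Serre quotient. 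To make your reduction airtight you would need either to extend Corollary~\ref{cor:Q=W-localization} to $\Ccal/\Acal$ explicitly (checking it is finite, idempotent-complete after $\Kar(-)$, and carries a cylinder functor descending from $\Ccal$), or to prove directly that the two quotient constructions have equivalent $K$-theory in the $\Ga$-parametrized setting. You flag the slot-compatibility of the fraction calculus as the main obstacle; the comparison of the two quotient models is an equally serious one and is exactly where the idempotent-completeness hypothesis earns its keep.
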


\begin{proof}[Sketch]
Use Quillen’s Theorem~B applied to the exact functor $F$ and the $Q$-construction nerve;
Serre closure ensures that exact sequences descend to the quotient and that the induced
map on nerves is a homotopy fibration \cite{Quillen73}. Compatibility with $\KW$ follows from
Corollary~\ref{cor:Q=W-localization}.
\end{proof}

\begin{remark}[Geometric form over $\SpecGnC{T}$]
Take $\Ccal=\QCoh(\SpecGnC{T})_{\mathrm{perf}}$ and $\Acal=\QCoh_Z(\SpecGnC{T})_{\mathrm{perf}}$
for a closed $Z\subset \SpecGnC{T}$ (defined by a two-sided prime system of $\Ga$-ideals).
Then $\Ccal/\Acal\simeq \QCoh_{U}(\SpecGnC{T})_{\mathrm{perf}}$ with $U=\SpecGnC{T}\setminus Z$,
and Theorem~\ref{thm:quillen-localization} yields the excision long exact sequence
\[
\cdots\to K_{n+1}(U)\to K_n(Z)\to K_n(\SpecGnC{T})\to K_n(U)\to\cdots,
\]
canonically compatible with the positional tensor $\otimes^{(j,k)}_\Ga$ on perfect complexes \cite{TT90}.
\end{remark}

\subsection{D\'evissage and Approximation}
\label{subsec:devissage}

\begin{definition}[Filtration and simple strata]
An object $X\in\Ccal$ \emph{admits a finite $\Acal$-filtration} if there is a chain of cofibrations
$0=X_0\hookrightarrow X_1\hookrightarrow\cdots\hookrightarrow X_m=X$
such that each subquotient $X_i/X_{i-1}$ lies in $\Acal$ and is \emph{$\Ga$-simple}
(i.e.\ has no nontrivial $\Ga$-stable exact subobjects).
\end{definition}

\begin{theorem}[D\'evissage]
\label{thm:devissage}
Assume every $X\in\Ccal$ admits a finite $\Acal$-filtration whose simple factors are
$(j,k)$-flat (in the positional sense) and that $\Acal$ is closed under extensions and summands.
Then the inclusion $\Acal\hookrightarrow\Ccal$ induces an equivalence
\[
\K(\Acal)\ \xrightarrow{\ \simeq\ }\ \K(\Ccal),
\]
hence $K_n(\Acal)\cong K_n(\Ccal)$ for all $n\ge 0$ \cite{Quillen73, Grayson76}.
\end{theorem}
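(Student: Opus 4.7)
The plan is to reduce to Quillen's classical dévissage argument while verifying that the $\Ga$-parametrised, slot-sensitive structure of $\Ccal$ does not obstruct the key homotopy-theoretic step. More precisely, I would show that the inclusion $\Acal \hookrightarrow \Ccal$ induces a weak equivalence on $Q$-construction nerves (equivalently, on $S_\bullet$-constructions by Corollary~\ref{cor:Q=W-localization}), which then upgrades to the spectrum-level equivalence $\K(\Acal)\simeq \K(\Ccal)$.

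First, I would invoke Quillen's Theorem~A: it suffices to verify that for every object $X\in\Ccal$, the comma fibre of the inclusion over $X$ in the $Q$-construction is contractible. Using the hypothesised finite $\Acal$-filtration $0=X_0\hookrightarrow X_1\hookrightarrow\cdots\hookrightarrow X_m=X$ with $\Ga$-simple strata $X_i/X_{i-1}\in\Acal$, I would induct on the filtration length $m$. The base case $m=0$ is trivial. For the inductive step, the short exact sequence $X_{m-1}\hookrightarrow X_m\twoheadrightarrow X_m/X_{m-1}$ together with the Additivity Theorem~\ref{thm:additivity} reduces contractibility of the fibre over $X_m$ to that of the fibres over $X_{m-1}$ (inductive hypothesis) and over $X_m/X_{m-1}$ (contractible because this object already lies in $\Acal$). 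Closure of $\Acal$ under extensions and summands then guarantees that the resulting equivalence glues to one on the whole nerve.

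Second, I would use the $(j,k)$-flatness hypothesis to verify the crucial step where the classical argument must be adapted. Quillen's contracting zigzag relies on the fact that pullbacks along admissible monomorphisms preserve short exact sequences; in our setting this has to hold positionally. The $(j,k)$-flatness of each simple subquotient ensures that the slot-indexed tensor operations induced by $\mu$ preserve the cofibration sequences obtained by pulling back the filtration of $X$ along an arbitrary morphism in $Q\Ccal$. By the extension-and-summand closure of $\Acal$, all intermediate objects that appear during this pullback remain in $\Acal$, so the morphisms assembling the homotopy equivalence remain admissible.

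The main obstacle I anticipate is the compatibility of $\Ga$-simplicity with every positional action simultaneously. Classically, simplicity of a stratum forces an immediate identification of mapping spaces with endomorphism rings, and this is what makes the fibre category equivalent to a classifying space of a group. In the $n$-ary $\Ga$-setting, one has to confirm that an object admitting no nontrivial $\Ga$-stable exact subobjects also admits no nontrivial slot-indexed subobjects for each $\alpha\in\Ga$ and each $j\in\{1,\dots,n\}$; this should follow from the standing hypothesis that the exact structure is defined slotwise (Section~\ref{sec:prelim}), but writing it out cleanly requires tracking all $n$ positional actions through the simplicity condition. Once this compatibility is in place, the classical Quillen formalism \cite{Quillen73, Grayson76} transports to the present setting and yields the desired equivalence $K_n(\Acal)\cong K_n(\Ccal)$ in every degree $n\ge 0$.
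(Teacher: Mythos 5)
Your overall strategy---apply Quillen's Theorem~A to the $Q$-construction and prove contractibility of the comma fibre over each $X$ by induction on the length of the $\Acal$-filtration---is the classically correct route, and it is close in spirit to the paper's own $S_\bullet$-based sketch (which likewise invokes additivity and ``cellular induction''). However, your inductive step has a genuine gap that Theorem~\ref{thm:additivity} does not close. Additivity is a statement about the $K$-theory spectrum of $\mathrm{Seq}(\Ccal)$; it does not say that the Theorem~A comma fibre over $X_m$ decomposes, up to weak equivalence, in terms of the fibres over $X_{m-1}$ and $X_m/X_{m-1}$. There is no ``additivity of contractibility of comma fibres'' along cofibration sequences. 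Quillen's actual dévissage proof works differently: for each $M$ it exhibits a filtered (hence contractible) poset of admissible subobjects $N\subset M$ with $N\in\Acal$, and this crucially uses that $\Acal$ is a Serre subcategory of an \emph{abelian} ambient category (closed under subobjects and quotients), so that the layers of the filtration of $M$ can be intersected with, and pushed forward to, arbitrary subobjects. Your argument substitutes Additivity for precisely this directedness step, and that substitution is not valid.

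Relatedly, the base case is not free. You assert the fibre over $Y\in\Acal$ is contractible ``because this object already lies in $\Acal$,'' but to see $(Y,\id_Y)$ as an initial (or terminal) object of the comma fibre one must know that a morphism $Y\twoheadleftarrow Y' \into N$ of $Q\Ccal$ with $N\in\Acal$ already lies in $Q\Acal$, which requires $Y'\subset Y$ to land back in $\Acal$, i.e.\ closure of $\Acal$ under admissible subobjects. The theorem's stated hypothesis (closure under extensions and summands) does not give this, and $(j,k)$-flatness of the simple strata does not repair it. To make either your proof or the paper's sketch rigorous one would need to strengthen the hypotheses towards Quillen's (a Serre subcategory of an abelian category of $\Ga$-modules), or else supply a genuinely new mechanism for the contracting homotopy adapted to exact/Waldhausen categories; as written, neither argument does so. Your observation about checking $\Ga$-simplicity slotwise is a good one, but it is downstream of these more basic issues.
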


\begin{proof}[Idea]
Apply Waldhausen’s D\'evissage Theorem to $S_\bullet(\Ccal)$ using the filtration hypothesis:
additivity reduces $[X]$ to the sum of its factors in $\pi_0$, and higher homotopies are controlled
by the same filtration via cellular induction. Positional flatness ensures cones remain in $\Acal$.
\end{proof}

\begin{theorem}[Approximation]
\label{thm:approximation}
Let $F:\Ccal\to\Dcal$ be an exact functor of Waldhausen categories that
\begin{enumerate}[label=(A\arabic*)]
  \item reflects weak equivalences and sends cofibrations to cofibrations,
  \item is homotopically essentially surjective and induces equivalences on mapping cylinders/cones,
  \item preserves $(j,k)$-flat objects and perfect cones (compatibility with $\mu$).
\end{enumerate}
Then $F$ induces a spectrum equivalence $\KW(\Ccal)\simeq \KW(\Dcal)$ (hence also on $\KQ$).
\end{theorem}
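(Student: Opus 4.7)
The plan is to deduce Theorem~\ref{thm:approximation} from Waldhausen's classical Approximation Theorem applied to the $S_\bullet$-constructions of $\Ccal$ and $\Dcal$, with the three hypotheses (A1)--(A3) translated into the two standard approximation axioms for the Waldhausen functor $F$. Concretely, I would show that the induced simplicial map $wS_\bullet F : wS_\bullet\Ccal \to wS_\bullet\Dcal$ is a levelwise equivalence; after geometric realization this upgrades to a weak equivalence of connective spectra $\KW(\Ccal) \simeq \KW(\Dcal)$, and the corresponding statement for $\KQ$ then follows from Corollary~\ref{cor:Q=W-localization}.

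The first step is to verify the classical axiom (App~1), that a morphism $f$ in $\Ccal$ is a weak equivalence if and only if $F(f)$ is one in $\Dcal$. The forward direction is built into $F$ being an exact Waldhausen functor, while the reverse direction is exactly hypothesis (A1). The second step is to verify (App~2): for every $X\in\Ccal$ and every morphism $y : F(X) \to Y$ in $\Dcal$, there must exist a cofibration $X \into X'$ in $\Ccal$ together with a weak equivalence $F(X') \iso Y$ such that the composite $F(X)\into F(X')\iso Y$ equals $y$. Here I would combine the homotopy essential surjectivity in (A2) with the hypothesis that $F$ induces equivalences on mapping cylinders and cones: choose a lift $\widetilde Y \in \Ccal$ with $F(\widetilde Y)\simeq Y$, form the mapping cylinder $X' = \mathrm{Cyl}(X\to\widetilde Y)$ inside $\Ccal$ using the $\mu$-compatible cylinder functor from Section~\ref{sec:prelim}, and use the cylinder equivalence part of (A2) to identify $F(X')$ with the corresponding cylinder in $\Dcal$, which supplies the required factorization.

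The third step is to confirm that these constructions remain within the prescribed Waldhausen model. Hypothesis (A3) ensures that the slot-sensitive cylinders and cones produced in the lifting of (App~2) preserve $(j,k)$-flatness and perfection under $\mu$, so the pushouts of cofibrations used to refine flags at higher simplicial level do not leave the subcategory of admissible $\Ga$-objects. With (App~1) and (App~2) in place, Waldhausen's Approximation Theorem then delivers the equivalence on $wS_\bullet$, hence on $\KW$, and the comparison to $\KQ$ is pulled back through Corollary~\ref{cor:Q=W-localization}.

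The hard part will be verifying (App~2) \emph{coherently} across the $S_\bullet$-filtration: one must lift entire flags $X_1 \into X_2 \into \cdots \into X_n$ in $\Ccal$ along $F$ up to a compatible network of weak equivalences in $\Dcal$, rather than just individual objects. This requires the $\Ga$-parametrized cylinder functor to satisfy gluing with respect to iterated pushouts of cofibrations of positional $\Ga$-modules, strictly enough that the inductive extension over the $S_n$-flag does not break the slot-sensitive flatness provided by (A3). Once this inductive lifting is carried out, the remainder of the proof is a formal invocation of Waldhausen's theorem, and the spectrum equivalence on $\KW$ and $\KQ$ follows.
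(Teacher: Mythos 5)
Your overall strategy is exactly what the paper intends: translate (A1)--(A3) into the two approximation axioms (App~1), (App~2) plus the cylinder axiom, and then invoke Waldhausen's Approximation Theorem to get a levelwise equivalence on $wS_\bullet$ and hence on spectra, with $\KQ$ handled by Corollary~\ref{cor:Q=W-localization}. Since the paper's own proof is a one-line sketch with the same content, you are following the intended route.

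However, there is a real gap in your verification of (App~2). To form $X' = \mathrm{Cyl}(X \to \widetilde Y)$ inside $\Ccal$ you need a morphism $X \to \widetilde Y$ \emph{in} $\Ccal$. What you are given is a morphism $y \colon F(X) \to Y$ in $\Dcal$ and a weak equivalence $F(\widetilde Y) \simeq Y$, which together produce a morphism $F(X) \to F(\widetilde Y)$ in $\Dcal$. Homotopy essential surjectivity on objects does not allow you to lift this to $\Ccal$: you need some form of homotopy fullness, or you need to read (A2) as directly asserting the approximation property on arrows (i.e.\ that every $F(X)\to Y$ factors through a cofibration $X \into X'$ with $F(X') \iso Y$). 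As written, your argument silently assumes a lifting of morphisms that the stated hypotheses do not obviously supply; this should either be added as a hypothesis or extracted explicitly from whatever precise meaning the paper attaches to ``homotopically essentially surjective.''

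Separately, the concern you raise at the end about lifting entire flags $X_1 \into \cdots \into X_n$ is not actually a gap you need to close. Once (App~1), (App~2), and the cylinder axiom hold, Waldhausen's theorem itself performs the inductive extension over the $S_n$-flags; that is precisely its content, and re-proving the flagwise lifting would amount to re-proving the theorem. What (A3) is genuinely needed for is the one point you already identify correctly: ensuring the cylinders and cones used in the (App~2) factorization stay inside the prescribed $(j,k)$-flat, perfect, $\mu$-compatible subcategory, so that the construction does not exit the Waldhausen model. Tightening the morphism-lifting step and dropping the unnecessary flag-lifting worry would make the argument complete.
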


\begin{proof}[Sketch]
Use Waldhausen’s Approximation Theorem: (A1)–(A3) provide the hypotheses for the induced map
on $S_\bullet$-constructions to be a weak equivalence after realization \cite{Waldhausen85}.
\end{proof}

\subsection{Cofinality, Karoubi envelope, and idempotents}
\label{subsec:cofinality-karoubi}

\begin{definition}[Cofinal subcategory]
A strictly full exact subcategory $\Acal\subset\Ccal$ is \emph{cofinal}
if for every $X\in\Ccal$ there exists $Y\in\Ccal$ such that $X\oplus Y\in\Acal$.
\end{definition}

\begin{theorem}[Cofinality and Karoubi invariance]
\label{thm:cofinal}
If $\Acal\subset\Ccal$ is cofinal and both are idempotent complete (or replaced by $\Kar(-)$),
then the inclusion induces an isomorphism $K_n(\Acal)\cong K_n(\Ccal)$ for all $n\ge 1$,
and a split injection on $K_0$. Moreover, the canonical functor
$\Ccal\to \Kar(\Ccal)$ induces an equivalence on $K_n$ for all $n\ge 0$ , \cite{Karoubi78, TT90}.
\end{theorem}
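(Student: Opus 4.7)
The plan is to deduce both halves from a single application of Grayson--Staffeldt cofinality to the Waldhausen $K$-theory spectra of Section~\ref{sec:prelim}, combined with the observation that $\Ccal\hookrightarrow\Kar(\Ccal)$ is itself cofinal by construction.

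First I would build, for any cofinal pair $\Acal\subset\Ccal$, a homotopy fiber sequence of connective spectra
\[
\KW(\Acal)\ \longrightarrow\ \KW(\Ccal)\ \longrightarrow\ H\!\bigl(K_0(\Ccal)/K_0(\Acal)\bigr),
\]
where $H$ denotes the Eilenberg--MacLane spectrum. Modelling $\KW$ via the $wS_\bullet$-construction from Section~\ref{sec:prelim}, one shows that $wS_\bullet\Acal\hookrightarrow wS_\bullet\Ccal$ is, up to homotopy, a principal cover with discrete fiber the abelian group $K_0(\Ccal)/K_0(\Acal)$. The key ingredients are: (i) cofinality, which lets one replace any $X\in\Ccal$ by $X\oplus Y\in\Acal$ for some $Y\in\Ccal$; (ii) the Additivity Theorem~\ref{thm:additivity}, which identifies the higher homotopy of the relative $S_\bullet$-construction with that of $wS_\bullet\Acal$; and (iii) the fact that cylinder, cone, and positional $(j,k)$-sum operations restrict from $\Ccal$ to $\Acal$ after enlarging any input by a suitable summand.

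Unwinding the long exact sequence of homotopy groups of this fiber sequence then yields isomorphisms $K_n(\Acal)\cong K_n(\Ccal)$ for every $n\ge 1$, together with a short exact sequence
\[
0\ \to\ K_0(\Acal)\ \to\ K_0(\Ccal)\ \to\ K_0(\Ccal)/K_0(\Acal)\ \to\ 0;
\]
a splitting of the injection on $K_0$ is given on generators by $[X]\mapsto [X\oplus Y]-[Y]$ for any $Y\in\Ccal$ with $X\oplus Y\in\Acal$, with independence of the choice of $Y$ guaranteed by Theorem~\ref{thm:additivity}. For the Karoubi invariance statement, the inclusion $\Ccal\hookrightarrow\Kar(\Ccal)$ is cofinal by the universal property of the idempotent completion: every $Z\in\Kar(\Ccal)$ sits as a summand $Z\oplus Z'\cong X\in\Ccal$. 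Feeding this pair into the fiber sequence above gives $K_n(\Ccal)\cong K_n(\Kar(\Ccal))$ for $n\ge 1$, while on $K_0$ the cokernel is generated by classes $[Z]$ subject to the relations $[Z]+[Z']=[X]$; stabilising once more by a complement of $Z'$ in $\Ccal$ kills these relations and yields an isomorphism in degree zero as well.

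The main obstacle, in our non-commutative $n$-ary $\Ga$-semiring setting, is verifying that the classical covering-space argument is compatible with the positional $\Ga$-module structures and the chosen cylinder functors in each of the three models $\nTGMod{T}^{\mathrm{bi}}_{\mathrm{fp}}$, $\Perf(\Dcat(\nTGMod{T}^{\mathrm{bi}}))$, and $\QCoh(\SpecGnC{T})_{\mathrm{perf}}$. Since the structure map $\mu$ is slot-wise additive, every endomorphism of an $n$-ary $\Ga$-module commutes with the positional $\Ga$-actions in each slot, so idempotents split canonically as $\Ga$-submodule decompositions; the analogous statement for perfect complexes and for $\QCoh_{\mathrm{perf}}$ reduces to the module-theoretic one by degreewise and stalkwise inspection, using that perfect complexes are closed under retracts. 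Once these compatibilities are in place, Grayson--Staffeldt cofinality applies verbatim, delivering the fiber sequence and hence both statements of the theorem.
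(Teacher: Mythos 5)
Your overall approach is the right one: building the Grayson--Staffeldt/Thomason--Trobaugh homotopy fiber sequence
\[
\KW(\Acal)\ \longrightarrow\ \KW(\Ccal)\ \longrightarrow\ H\!\bigl(K_0(\Ccal)/K_0(\Acal)\bigr)
\]
is exactly the ``classical cofinality theorem'' the paper invokes, and your verification that the $S_\bullet$-, cylinder-, and positional $\Ga$-structures restrict correctly is in the right spirit. The long exact sequence immediately gives the isomorphisms $K_n(\Acal)\cong K_n(\Ccal)$ for $n\ge 1$ and the injectivity on $K_0$ with cokernel $K_0(\Ccal)/K_0(\Acal)$, and you correctly observe that $\Ccal\hookrightarrow\Kar(\Ccal)$ is cofinal. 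So the skeleton of the argument agrees with what the paper (tersely) claims to use.

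However, both of your low-degree conclusions have gaps. First, the proposed retraction $[X]\mapsto [X\oplus Y]-[Y]$ does not land in $K_0(\Acal)$: while $[X\oplus Y]\in K_0(\Acal)$, the class $[Y]$ lives only in $K_0(\Ccal)$, so the difference is a well-defined element of $K_0(\Ccal)$ but not of $K_0(\Acal)$. Cofinality alone does not produce a splitting of $K_0(\Acal)\hookrightarrow K_0(\Ccal)$; when one exists (e.g.\ via a rank function) it comes from extra structure, not from the fiber sequence. Your appeal to Additivity for ``independence of the choice of $Y$'' does not repair the fact that the map is not defined into the correct target.

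Second, the assertion that ``stabilising once more by a complement of $Z'$ in $\Ccal$ kills these relations and yields an isomorphism in degree zero'' for $\Ccal\to\Kar(\Ccal)$ is incorrect. The very fiber sequence you built identifies the cokernel with $K_0(\Kar(\Ccal))/K_0(\Ccal)$, and the relations $[Z]+[Z']=[X]$ (for $Z\oplus Z'\cong X\in\Ccal$) only give $[Z]=-[Z']$ in the cokernel, not $[Z]=0$; one cannot stabilise $Z$ inside $\Ccal$ precisely because $Z\notin\Ccal$. In concrete terms, take $\Ccal$ to be free modules and $\Kar(\Ccal)$ to be finitely generated projectives over a ring with nontrivial reduced class group: $K_0$ changes. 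So idempotent completion is a $K_n$-equivalence only for $n\ge 1$ (and is trivially a $K_*$-equivalence if $\Ccal$ is already idempotent complete, as the theorem's hypothesis actually posits); the blanket claim for $n=0$ requires either that hypothesis or an additional argument that you have not supplied.
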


\begin{proof}
This is the classical cofinality (Karoubi) theorem adapted to exact/Waldhausen pairs;
the proof carries over verbatim, using additivity and the fact that retracts split in $\Kar(-)$.
\end{proof}

\subsection{Fundamental theorems and nilpotent invariance}
\label{subsec:bass-nil}

\begin{theorem}[Bass--Quillen fundamental triangle (relative version)]
\label{thm:bass}
Let $S=T[t]$ be the polynomial $\Ga$-semiring (with $n$-ary product extending $\mu$ slotwise),
and let $\Ccal_S$ be the exact category over $S$ corresponding to $\Ccal$ over $T$.
Then there is a canonical homotopy fiber sequence
\[
\K(\Ccal_T)\ \longrightarrow\ \K(\Ccal_S)\ \oplus\ \K(\Ccal_{S^{-1}})\ \longrightarrow\ \K(\Ccal_{S,S^{-1}}),
\]
where $S^{-1}$ denotes the $\Ga$-localization at $t$ (inverting the distinguished $\Ga$-multiplicative set)
and $\Ccal_{S,S^{-1}}$ the corresponding localization category. In particular,
\[
K_n(\Ccal_T)\ \oplus\ K_n(\Ccal_{S,S^{-1}})\ \twoheadleftarrow\ K_n(\Ccal_S)\ \oplus\ K_n(\Ccal_{S^{-1}})
\]
is exact for all $n$.\cite{Bass68}
\end{theorem}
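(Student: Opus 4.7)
The plan is to realize the claimed fiber sequence as a Mayer--Vietoris square arising from the admissible cover of the projective $\Ga$-line over $T$ by the two affine pieces $\SpecGnC{S}$ and $\SpecGnC{S^{-1}}$ with intersection $\SpecGnC{T[t,t^{-1}]}$, and to reduce the verification that this square is homotopy cartesian to two instances of Quillen localization combined with d\'evissage.

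First I would set up the base change functors $f^*\colon\Ccal_T\to\Ccal_S$, $g^*\colon\Ccal_T\to\Ccal_{S^{-1}}$, and the two further functors into $\Ccal_{S,S^{-1}}$ induced by the canonical maps $S\to S\otimes_T S^{-1}$ and $S^{-1}\to S\otimes_T S^{-1}$. Each must be checked to be exact, to preserve cofibrations and weak equivalences, and to commute with the positional cylinders fixed in Section~\ref{sec:prelim}; this is routine once one notes that $T[t]$ and $T[t^{-1}]$ are positionally flat over $T$ in every slot. The outcome is a commutative square of $\K$-theory spectra with $\K(\Ccal_T)$ at the top-left vertex, $\K(\Ccal_S)$ and $\K(\Ccal_{S^{-1}})$ off-diagonal, and $\K(\Ccal_{S,S^{-1}})$ at the bottom-right.

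The heart of the argument is to compute the horizontal fibers of this square and identify them with $\K(\Ccal_T)$. I would apply Theorem~\ref{thm:quillen-localization} to the Serre pair $(\Ccal_S,\Ccal_S^{t\text{-tors}})$, where $\Ccal_S^{t\text{-tors}}$ is the full subcategory of perfect objects on which the positional action of $t$ is nilpotent in every slot, obtaining a fiber sequence
\[
\K(\Ccal_S^{t\text{-tors}})\longrightarrow\K(\Ccal_S)\longrightarrow\K(\Ccal_{S,S^{-1}}).
\]
Then I would invoke Theorem~\ref{thm:devissage} for the $\Ga$-stable filtration $0\subset t^{m-1}X\subset\cdots\subset tX\subset X$ of any $t$-nilpotent $X$: each subquotient is annihilated by $t$ and hence lies in the essential image of the pushforward from $\Ccal_T$, so d\'evissage collapses $\K(\Ccal_S^{t\text{-tors}})$ onto $\K(\Ccal_T)$. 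A symmetric argument with $t^{-1}$ in place of $t$ handles the lower row, giving a second identification of the fiber with $\K(\Ccal_T)$.

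Finally I would assemble the homotopy cartesian square by comparing the two localization fiber sequences via Theorem~\ref{thm:additivity}: both identifications of the fiber with $\K(\Ccal_T)$ match the map from the top-left vertex, so the square is cartesian, which unwinds to the required fiber sequence $\K(\Ccal_T)\to\K(\Ccal_S)\oplus\K(\Ccal_{S^{-1}})\to\K(\Ccal_{S,S^{-1}})$ and its long exact $K$-group sequence. The main obstacle will be the d\'evissage step: in the non-commutative $n$-ary $\Ga$-setting, ``$t$-torsion'' must be formulated slot-by-slot and compatibly with $\Ga$, and the hypothesis that simple factors are $(j,k)$-flat is not automatic for perfect complexes. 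Verifying these hypotheses will require a cellular induction on resolution length together with careful tracking of the positional cones produced by multiplication by $t$; once this is in place, the remaining Mayer--Vietoris patching is formal from the results of Sections~\ref{subsec:waldhausen-fibration}--\ref{subsec:devissage}.
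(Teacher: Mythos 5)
Your overall plan tracks the paper's sketch quite closely — both use Quillen localization for the pair $(\Ccal_S,\Ccal_S^{t\text{-tors}})$, d\'evissage of the $t$-adic filtration to collapse the torsion fiber to $\K(\Ccal_T)$, and a Mayer--Vietoris assembly. But there is a genuine gap at the final assembly step that the paper's sketch (terse as it is) flags and your proposal does not address.

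The problem is the sentence ``both identifications of the fiber with $\K(\Ccal_T)$ match the map from the top-left vertex, so the square is cartesian.'' The localization sequence
\[
\K(\Ccal_S^{t\text{-tors}})\longrightarrow\K(\Ccal_S)\longrightarrow\K(\Ccal_{S,S^{-1}})
\]
identifies the fiber of the \emph{right}-hand leg $\K(\Ccal_S)\to\K(\Ccal_{S,S^{-1}})$ as $\K(\Ccal_T)$ via d\'evissage, but the resulting map $\K(\Ccal_T)\to\K(\Ccal_S)$ is the \emph{pushforward} $i_*$ along $S\to T$ ($t\mapsto 0$), i.e.\ restriction of scalars through the quotient. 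The top-left vertex $\K(\Ccal_T)$ in your Mayer--Vietoris square maps to $\K(\Ccal_S)$ by the \emph{base-change} $f^*$ (extension of scalars along $T\hookrightarrow S$). These are different exact functors and do not agree on $K$-theory without further argument; the square whose cartesianness you want is built from $f^*$, whereas your fiber computation controls $i_*$. In the classical ring case this mismatch is precisely why the Bass--Quillen argument is not a one-step Mayer--Vietoris: one must either invoke homotopy invariance $\K(\Ccal_T)\simeq\K(\Ccal_S)$ along $t$ (which the paper's sketch explicitly cites as ``flatness conditions needed for homotopy invariance along $t$'') to compare $i_*$ with $f^*$ up to the boundary map, or prove a projective-line bundle formula $\K(\Perf(\mathbb{P}^1_\Ga(T)))\simeq\K(\Ccal_T)\times\K(\Ccal_T)$ and then split off one copy. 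You also realize the square from the cover of a projective $\Ga$-line, so after applying Corollary~\ref{cor:MV} the bottom-right corner of the cartesian square is $\K(\Perf(\mathbb{P}^1_\Ga(T)))$, not $\K(\Ccal_T)$; without the projective-line formula the rotation you need does not come for free. Supplying one of these two missing ingredients — homotopy invariance in the $\Ga$-slotwise sense, or the $\mathbb{P}^1$ splitting — is the actual content of the Bass--Quillen step, and your proposal currently leaves it out.
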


\begin{proof}[Idea]
Use excision/localization (Theorem~\ref{thm:quillen-localization}) for the pair
$(\Ccal_S,\Ccal_{S\text{-tors}})$ and compare with the principal open $D(t)$; glue via the
Mayer–Vietoris square in the Waldhausen setting. Slotwise extension of $\mu$ ensures flatness
conditions needed for homotopy invariance along $t$.
\end{proof}

\begin{theorem}[Nilpotent invariance]
\label{thm:nil-invariance}
Let $I\subset T$ be a two-sided $\Ga$-ideal with $I^N=0$ (nilpotent) in the $n$-ary sense
(i.e.\ any $N$-fold iterated use of $\mu$ with one input from $I$ yields $0$).
Then the projection induces a spectrum equivalence
\[
\K\big(\Ccal_{T}\big)\ \xrightarrow{\ \simeq\ }\ \K\big(\Ccal_{T/I}\big),
\]
hence $K_n(\Ccal_T)\cong K_n(\Ccal_{T/I})$ for all $n\ge 0$.\cite{Weibel13}
\end{theorem}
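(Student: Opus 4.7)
The plan is to proceed by induction on the nilpotence index $N$ together with a d\'evissage argument in the square-zero case. For the inductive step, I would factor the quotient map $T\onto T/I$ through $T\onto T/I^{N-1}\onto T/I$. The first map has kernel $I^{N-1}$, which is square-zero in the $n$-ary sense because any iterated use of $\mu$ with two inputs from $I^{N-1}$ lies in $I^{2N-2}\subset I^N=0$; the second has kernel $I/I^{N-1}$, nilpotent of index at most $N-1$, so the inductive hypothesis applies. It therefore suffices to treat the case $I^2=0$ and to conclude by composing the two resulting spectrum equivalences.

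For the square-zero case I would apply the D\'evissage Theorem (Theorem~\ref{thm:devissage}) to the inclusion $\iota\colon\Acal\hookrightarrow\Ccal_T$, where $\Acal$ is the strictly full Waldhausen subcategory of those $X\in\Ccal_T$ on which every slot-insertion from $I$ acts by zero. Restriction of scalars along $\pi\colon T\to T/I$ identifies $\Ccal_{T/I}\iso\Acal$ as exact/Waldhausen categories, so the desired equivalence is equivalent to $\K(\Acal)\iso\K(\Ccal_T)$. For any $X\in\Ccal_T$, the two-step cofibration sequence
\[
IX\ \into\ X\ \onto\ X/IX,
\]
where $IX$ denotes the $\Ga$-submodule generated by all $n$-ary products with at least one input from $I$, provides a finite $\Acal$-filtration: $X/IX\in\Acal$ tautologically, and $IX\in\Acal$ because the hypothesis $I^2=0$ kills any further $I$-action on the submodule $IX$. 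Closure of $\Acal$ under extensions and retracts follows from two-sidedness of the $\Ga$-ideal and the slot-compatibility of $\mu$, so (E1)--(E2) of Section~\ref{subsec:exact-pairs} hold. Theorem~\ref{thm:devissage} then yields $\K(\Acal)\iso\K(\Ccal_T)$, and composition across the induction gives $\K(\Ccal_{T/I})\iso\K(\Ccal_T)$.

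The main obstacle will be verifying the $(j,k)$-positional flatness hypothesis of Theorem~\ref{thm:devissage} together with the $\Ga$-simplicity of the strata: because $\mu$ is slot-sensitive, flatness of $X/IX$ as a $T/I$-module in one positional slot need not propagate to the others, and in the perfect-complex and quasi-coherent models one must further check that the degreewise $I$-adic filtration keeps the successive pieces inside the compact/perfect subcategory. I would address the first point by invoking the bi-positional flatness built into the standing choice of $\Ccal$ (Section~\ref{sec:prelim}), refining the two-step filtration by a $\Ga$-simple composition series within $X/IX$ and within $IX$ whenever simplicity is not automatic; such refinements exist under the finite generation hypothesis in $\Ccal_T$ together with a Noetherianity assumption on the slotwise $\Ga$-action. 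For the second point, I would apply the filtration degreewise to a bounded perfect representative, using exactness of the positional cone under $\mu$ (verified in Section~\ref{sec:prelim}) to keep the filtered pieces in $\Perf$. With these two verifications the inductive reduction and the d\'evissage step are formal, and the spectrum equivalence $\K(\Ccal_T)\iso\K(\Ccal_{T/I})$ follows.
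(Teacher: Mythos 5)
Your argument is the same d\'evissage-via-filtration idea the paper sketches, but packaged more carefully and, in one respect, more coherently. The paper's sketch forms the pair $(\Ccal_T,\Acal)$ with $\Acal$ the $I$-torsion subcategory, asserts that nilpotence gives finite filtrations with quotients killed by $I$, claims that ``$\Acal$ is contractible in $\K$ by d\'evissage,'' and then ``applies Theorem~\ref{thm:waldhausen-fibration}.'' As written this does not parse: when $I$ is nilpotent every object of $\Ccal_T$ is $I$-torsion, so $\Acal=\Ccal_T$ and the fibration sequence degenerates; and d\'evissage cannot yield contractibility of $\Acal$ but rather an equivalence $\K(\text{objects killed by }I)\simeq\K(\Acal)$. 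Your route fixes this: you take $\Acal$ to be the subcategory of objects on which $I$ acts by zero, identify it with $\Ccal_{T/I}$ by restriction of scalars, exhibit the filtration $IX\into X\onto X/IX$, and apply d\'evissage directly to the inclusion $\Acal\into\Ccal_T$ without any detour through the fibration theorem. Your explicit induction on the nilpotence index (reducing to $I^2=0$ via $T\onto T/I^{N-1}\onto T/I$) and the paper's one-shot filtration by powers of $I$ are equivalent reorganizations of the same d\'evissage input; yours is cleaner bookkeeping and makes the square-zero case the only one that needs genuine work.

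The gaps you flag are real and you should not wave them away. Theorem~\ref{thm:devissage} as stated requires $\Ga$-simple, $(j,k)$-flat strata; the crude two-step filtration has neither, your refinement to a $\Ga$-simple composition series needs a finite-length or Noetherian hypothesis not present in the theorem statement, and finite generation of $IX$ is itself a Noetherian-type assumption. More seriously, both your argument and the paper's are $G$-theory arguments: they filter \emph{all} finitely generated objects, which is legitimate for $\nTGMod{T}^{\mathrm{bi}}_{\mathrm{fp}}$, but $\Ccal$ is also allowed to be the perfect-complex or $\QCoh_{\mathrm{perf}}$ model, where nilpotent invariance is classically \emph{false} ($K_1$ of a square-zero extension already fails to agree with $K_1$ of the quotient). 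In those models $IX$ and $X/IX$ need not be perfect over $T$, let alone over $T/I$, so the filtration leaves the subcategory on which d\'evissage is being applied. This is a defect in the theorem as stated across all three models, not merely in the proof; in the perfect-complex setting the correct analogue is the comparison of \emph{relative} $K$-theory with cyclic-homology-type invariants, not an unconditional equivalence. Within the finitely-generated-module model, with the Noetherian and simplicity refinements you describe, your argument is sound.
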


\begin{proof}[Sketch]
Form the Waldhausen pair $(\Ccal_T,\Acal)$ with $\Acal$ the subcategory of $I$-torsion objects.
Nilpotence implies any object in $\Acal$ admits a finite filtration with successive quotients killed by $I$,
hence $\Acal$ is contractible in $\K$ by d\'evissage (Theorem~\ref{thm:devissage}).
Apply Theorem~\ref{thm:waldhausen-fibration} to conclude.
\end{proof}

\subsection{Consequences over the non-commutative $\Ga$-spectrum}
\label{subsec:loc-over-spec}

Let $i:Z\hookrightarrow X:=\SpecGnC{T}$ be a closed immersion and $j:U\hookrightarrow X$ its open complement.
Write $\Perf(X)$ for perfect $\Ocal_X$-complexes and similarly for $Z,U$.

\begin{theorem}[Localization on $\SpecGnC{T}$]
\label{thm:NC-loc-scheme}
There is a homotopy fiber sequence of spectra
\[
\K\big(\Perf(Z)\big)\ \longrightarrow\ \K\big(\Perf(X)\big)\ \longrightarrow\ \K\big(\Perf(U)\big),
\]
natural in morphisms of non-commutative $\Ga$-semiringed spaces and compatible with base–change
along $\Ga$-morphisms. In particular, one obtains the long exact excision sequence on $K_n$.
\end{theorem}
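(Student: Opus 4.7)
The plan is to realise this theorem as a geometric incarnation of the abstract Quillen/Waldhausen localization already established in Theorems~\ref{thm:waldhausen-fibration} and~\ref{thm:quillen-localization}, following the Thomason--Trobaugh template adapted to $X=\SpecGnC{T}$. First I would introduce the strictly full Waldhausen subcategory $\Perf_Z(X)\subset\Perf(X)$ of perfect $\Ocal_X$-complexes whose cohomology sheaves are set-theoretically supported on $Z$, and verify that $(\Perf(X),\Perf_Z(X))$ is a Waldhausen/Serre pair in the sense of Section~\ref{subsec:exact-pairs}: closure under extensions, retracts, and weak equivalences follows from the local nature of support, while closure under the fixed cofibration structure (degreewise split monomorphisms) is automatic. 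Applying Theorem~\ref{thm:waldhausen-fibration} then produces a fibre sequence
\[
\K\bigl(\Perf_Z(X)\bigr)\longrightarrow\K\bigl(\Perf(X)\bigr)\longrightarrow\K\bigl(\Perf(X)/\Perf_Z(X)\bigr),
\]
so the remaining work is to identify the outer terms with $\K(\Perf(Z))$ and $\K(\Perf(U))$ respectively.

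For the closed term, pushforward along $i$ supplies a fully faithful exact embedding $i_{*}:\Perf(Z)\hookrightarrow\Perf_Z(X)$. Conversely, any $Z$-supported perfect complex should be locally annihilated by some power of the defining two-sided prime $\Ga$-ideal system cutting out $Z$, and hence should admit a canonical finite filtration whose successive subquotients are pulled from $\Perf(Z)$. Combining d\'evissage (Theorem~\ref{thm:devissage}) along such a filtration with nilpotent invariance (Theorem~\ref{thm:nil-invariance}) on each infinitesimal neighbourhood of $Z$ inside $X$ would then yield the desired spectrum equivalence $\K(\Perf(Z))\simeq\K(\Perf_Z(X))$ needed to rewrite the fibre of the sequence above.

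For the open term, restriction along $j$ factors through the Verdier quotient and induces a canonical comparison functor $\Perf(X)/\Perf_Z(X)\to\Perf(U)$ that I would show is an equivalence after idempotent completion, hence a $\K$-theory equivalence by the Karoubi-invariance clause of Theorem~\ref{thm:cofinal}. The hard part, and the main obstacle of the whole argument, is essential surjectivity on the nose: every perfect complex on $U$ must be exhibited as the restriction of a perfect complex on $X$, possibly after adding a $Z$-supported direct summand. This is the $\Ga$-semiring analogue of the Thomason--Trobaugh extension lemma; I would attack it by extending coherently on affine $\Ga$-opens using finite generation of bi-positional $\Ga$-modules in $\nTGMod{T}^{\mathrm{bi}}_{\mathrm{fp}}$, gluing along admissible $\Ga$-covers via the slot-sensitive cylinder and cone functors fixed in Section~\ref{sec:prelim}, and absorbing the residual summand defect via Theorem~\ref{thm:cofinal}. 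Once both identifications are in place, naturality in morphisms of non-commutative $\Ga$-semiringed spaces and compatibility with base change are automatic, since $\Perf(-)$, $\Perf_Z(-)$, and the underlying $S_\bullet$ and $Q$-constructions are all strictly functorial and the comparisons $i_{*},j^{*}$ are natural in the input space; the announced long exact excision sequence on $K_n$ is then the homotopy long exact sequence of the resulting fibre sequence.
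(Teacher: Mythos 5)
Your proposal follows essentially the same top-level approach as the paper: form the Waldhausen/Serre pair $\bigl(\Perf(X),\Perf_Z(X)\bigr)$, apply the abstract localization theorem (you invoke Theorem~\ref{thm:waldhausen-fibration}, the paper invokes Theorem~\ref{thm:quillen-localization}; these are interchangeable via Corollary~\ref{cor:Q=W-localization}), and identify the Verdier quotient with $\Perf(U)$. Where you differ is in what you choose to prove versus cite: the paper's two-line proof simply references an external ``derived recollement'' for the quotient identification, whereas you attempt to carry out the Thomason--Trobaugh extension argument in-house, which is more self-contained though necessarily heuristic at the level of detail given.

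More importantly, you have noticed and tried to repair a genuine gap in the paper's own argument. The paper's proof sets $\Acal=\Perf_Z(X)$, so the fiber term produced by Theorem~\ref{thm:quillen-localization} is $\K(\Perf_Z(X))$, \emph{not} $\K(\Perf(Z))$ as asserted in the theorem statement (and the paper has explicitly declared $\Perf(Z)$ to mean perfect $\Ocal_Z$-complexes, not $Z$-supported perfect $\Ocal_X$-complexes). The paper never addresses this identification. You do, by proposing $i_*$ as a fully faithful embedding and then a d\'evissage/nilpotent-invariance argument to show $\K(\Perf(Z))\simeq\K(\Perf_Z(X))$. This is exactly the right question to raise, though be aware that this step is the most delicate part of the whole theorem: in the classical Thomason--Trobaugh setting, the localization fiber is $\K(X\text{ on }Z)=\K(\Perf_Z(X))$, and its identification with $\K(\Perf(Z))$ fails in general for $K$-theory of perfect complexes (as opposed to $G$-theory of coherent sheaves); it holds only under additional regularity-type hypotheses on $Z$. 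Your d\'evissage route is therefore plausible only if the filtration factors are genuinely perfect over $\Ocal_Z$, which requires an argument beyond what you sketch. Still, your version surfaces the issue that the paper silently elides, and is at least as rigorous as the published proof.
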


\begin{proof}
Use Theorem~\ref{thm:quillen-localization} with $\Ccal=\Perf(X)$ and $\Acal=\Perf_Z(X)$.
The quotient identifies with $\Perf(U)$ by the derived recollement constructed in \cite{Gokavarapu1,Gokavarapu2,Gokavarapu3}, which is compatible with the Waldhausen structure.
\end{proof}

\begin{corollary}[Mayer–Vietoris for admissible covers]
\label{cor:MV}
If $X=U\cup V$ with $U,V$ admissible opens (finite unions of basic opens $D(a,\boldsymbol{\gamma})$),
then the square
\[
\begin{tikzcd}[column sep=large,row sep=large]
\K\big(\Perf(U\cap V)\big) \arrow[r] \arrow[d] & \K\big(\Perf(U)\big) \arrow[d] \\
\K\big(\Perf(V)\big) \arrow[r] & \K\big(\Perf(X)\big)
\end{tikzcd}
\]
is homotopy cartesian. Consequently, there is a Mayer–Vietoris long exact sequence on $K_n$ \cite{TT90}.
\end{corollary}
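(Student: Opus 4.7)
The plan is to deduce the homotopy cartesian square from two applications of Theorem~\ref{thm:NC-loc-scheme} linked by an excision identification. The key input specific to the non-commutative $\Ga$-setting is that perfect complexes supported on the closed complement $Z = X \setminus U$ can be computed either on $X$ or on the admissible open $V \supset Z$; this excision-style equivalence is what collapses the total fiber of the Mayer--Vietoris square.

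First I would set $Z := X \setminus U$. Because $U \cup V = X$, the closed subset $Z$ sits inside $V$, so restriction along the open immersion $V \hookrightarrow X$ respects the ``support in $Z$'' condition. Write $\Perf_Z(X) \subset \Perf(X)$ and $\Perf_Z(V) \subset \Perf(V)$ for the Waldhausen subcategories of perfect complexes with support contained in $Z$. Applying Theorem~\ref{thm:NC-loc-scheme} to the closed immersions $Z \hookrightarrow X$ and $Z \hookrightarrow V$ (inside the admissible open $V$) and noting that $V \setminus Z = U \cap V$, one obtains a commutative ladder of homotopy fiber sequences
\begin{equation*}
\begin{tikzcd}[column sep=small]
\K(\Perf_Z(V)) \arrow[r] \arrow[d] & \K(\Perf(V)) \arrow[r] \arrow[d] & \K(\Perf(U\cap V)) \arrow[d] \\
\K(\Perf_Z(X)) \arrow[r] & \K(\Perf(X)) \arrow[r] & \K(\Perf(U))
\end{tikzcd}
\end{equation*}
in which all vertical arrows are induced by restriction along $V \hookrightarrow X$.

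The main step, and the one I expect to be the principal technical obstacle, is the \emph{excision} claim that the left-hand vertical $\K(\Perf_Z(V)) \to \K(\Perf_Z(X))$ is a spectrum equivalence. In the classical Thomason--Trobaugh setting this is proved by exhibiting extension-by-zero along $V \hookrightarrow X$ as an inverse, using that a complex supported on $Z \subset V$ extends uniquely up to quasi-isomorphism. I would carry this out here by invoking the derived recollement from \cite{Gokavarapu1,Gokavarapu2,Gokavarapu3}, which supplies an adjoint triple $(j_!, j^*, j_*)$ along admissible open immersions together with the vanishing $j^* i_* = 0$ for the complementary closed immersion $i$. The verification then reduces to checking that support, perfection, and $(j,k)$-flatness in the positional slots of $\mu$ are preserved under the gluing functors, and that the cylinder and positional cone constructions commute with the base change $V \hookrightarrow X$. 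This slot-by-slot bookkeeping under $\mu$ is precisely where the non-commutative $\Ga$-content enters and where the classical proof requires genuine adaptation.

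Granted the excision equivalence, a standard map-of-fiber-sequences argument concludes the proof: since the left column of the ladder is an equivalence, the total fiber of the right-hand square is contractible, so that square is homotopy cartesian. Re-orienting to the form of the Corollary yields the asserted Mayer--Vietoris square, and the Mayer--Vietoris long exact sequence on $K_n$ is then the associated Puppe long exact sequence, with the boundary maps inherited from the localization fiber sequences of Theorem~\ref{thm:NC-loc-scheme} and naturality in admissible covers coming from the $2$-functoriality of the recollement.
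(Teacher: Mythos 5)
Your proposal is the standard Thomason--Trobaugh derivation of Mayer--Vietoris from localization plus excision, and it is exactly what the paper's citation to \cite{TT90} is implicitly invoking: the paper gives no written proof for Corollary~\ref{cor:MV}, so your argument correctly fills in the intended reasoning. Setting $Z = X\setminus U \subseteq V$, applying Theorem~\ref{thm:NC-loc-scheme} to $Z\hookrightarrow X$ and $Z\hookrightarrow V$, and collapsing the square via the excision equivalence $\K(\Perf_Z(X))\simeq \K(\Perf_Z(V))$ is the right skeleton, and you correctly identify the genuine technical burden as verifying this excision equivalence in the $\Ga$-setting via the recollement of \cite{Gokavarapu1,Gokavarapu2,Gokavarapu3}.

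One small inconsistency to fix: you say all vertical maps are ``induced by restriction along $V\hookrightarrow X$,'' but restriction $j^*$ for $j:V\hookrightarrow X$ runs \emph{from} $\Perf(X)$ \emph{to} $\Perf(V)$, whereas your ladder is drawn with the $V$ row on top mapping down to the $X$ row. Either flip the two rows so the vertical maps are honestly $j^*$ (and correspondingly $\Perf(U)\to\Perf(U\cap V)$ on the right), or replace ``restriction'' by the extension-by-zero functor $j_!$ — but then note that $j_!$ only lands in perfect complexes when restricted to the supported subcategory $\Perf_Z(V)$, so $j_!$ cannot supply the middle and right vertical arrows of the ladder. The former fix is the clean one and matches Thomason--Trobaugh's formulation; with that correction the map-of-fiber-sequences argument closes as you describe.
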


\subsection{Compatibility with $K_0$ and $K_1$}
\label{subsec:K01-compat}

\begin{proposition}[Boundary morphisms in low degrees]
\label{prop:boundary-low}
For any exact pair $(\Ccal,\Acal)$, the boundary map
$\partial:K_1(\Ccal/\Acal)\to K_0(\Acal)$
identifies with the class of the determinant of the connecting
cofibration in $\mathrm{Seq}(\Ccal)$ (Theorem~\ref{thm:additivity}),
and the localization sequence in Theorem~\ref{thm:waldhausen-fibration}
restricts to the exact six-term segment
\[
K_1(\Acal)\to K_1(\Ccal)\to K_1(\Ccal/\Acal)\xrightarrow{\ \partial\ } K_0(\Acal)\to K_0(\Ccal)\to K_0(\Ccal/\Acal).
\]
\end{proposition}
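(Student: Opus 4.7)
The proposition has two parts: first, the six-term exact segment extracted from the $K$-theoretic fiber sequence, and second, the concrete identification of $\partial\colon K_1(\Ccal/\Acal)\to K_0(\Acal)$ as the determinant of the connecting cofibration in $\mathrm{Seq}(\Ccal)$. My plan is to handle these two parts separately, in order of difficulty.

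Part one is essentially bookkeeping. I would simply truncate the long exact sequence produced by Theorem~\ref{thm:waldhausen-fibration} to the range $n\in\{0,1\}$. Because the Waldhausen $K$-theory spectra in our setting are connective, the groups $K_n$ vanish for $n<0$, so the sequence terminates on the right at $K_0(\Ccal/\Acal)$. Naturality in $(\Ccal,\Acal)$ is inherited from the fiber sequence, and exactness at each spot is a formal consequence of the long exact sequence of Theorem~\ref{thm:waldhausen-fibration}.

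For part two, I would work in the Waldhausen $S_\bullet$-model and use the standard description of $\pi_1$ of the localization. A class $\xi \in K_1(\Ccal/\Acal)$ is represented, after simplicial stabilisation, by a cofibration sequence $A \hookrightarrow B \twoheadrightarrow C$ in $\Ccal$ in which $A \in \Acal$, so that the induced morphism $B \to C$ becomes a weak equivalence in $\Ccal/\Acal$. The spectrum-level connecting map of Theorem~\ref{thm:waldhausen-fibration} sends $\xi$ to $[A]\in K_0(\Acal)$, and this assignment is well-defined modulo the additivity relations thanks to axiom (E2). To recognise this as the determinant, I would invoke Additivity (Theorem~\ref{thm:additivity}): the equivalence $\KW(\mathrm{Seq}(\Ccal)) \simeq \KW(\Ccal)\times\KW(\Ccal)$ restricts on sequences with $A\in\Acal$ to a map whose first-factor projection lands in $\KW(\Acal)$ and, at the level of $\pi_1$, is exactly the determinant map appearing in the statement of the proposition.

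The main obstacle is precisely the compatibility of these two descriptions: one must verify that the spectrum-level connecting morphism of Theorem~\ref{thm:waldhausen-fibration} agrees, on $\pi_1$, with the first-factor projection furnished by Theorem~\ref{thm:additivity}. In the classical Waldhausen setting this is the content of the Nenashev--Grayson presentation of $K_1$ together with the standard identification of the localization boundary as the class of the $\Acal$-fibre of a representing cofibration. In our $\Ga$-parametrised context both constructions depend only on the cylinder, cone, and extension-closure axioms already verified in Section~\ref{sec:prelim}, and on the compatibility of cylinders with the slot-sensitive product $\mu$, so the classical argument transports verbatim once the cylinder-compatibility bookkeeping is done. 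Granted this, Proposition~\ref{prop:boundary-low} follows by a direct diagram chase matching the induced maps on $K$-groups in the two models.
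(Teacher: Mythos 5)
The paper states Proposition~\ref{prop:boundary-low} with no proof attached (the next environment is a remark), so there is no argument in the text to compare yours against. Your first part---truncating the long exact sequence of Theorem~\ref{thm:waldhausen-fibration} at $n\in\{0,1\}$ and using connectivity of the spectra to terminate at $K_0(\Ccal/\Acal)$---is correct and is the expected bookkeeping.

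The second part correctly isolates but does not actually close the essential technical step. You represent $\xi\in K_1(\Ccal/\Acal)$ by a cofibration sequence $A\hookrightarrow B\twoheadrightarrow C$ in $\Ccal$ with $A\in\Acal$, assert $\partial(\xi)=[A]$, and then claim this agrees with the first-factor projection coming from the splitting $\KW(\mathrm{Seq}(\Ccal))\simeq\KW(\Ccal)\times\KW(\Ccal)$ of Theorem~\ref{thm:additivity}. That agreement is precisely what the proposition asserts, and your only justification is that ``the classical argument transports verbatim.'' Two concrete gaps remain. First, in Waldhausen's fibration theorem the boundary is usually computed via mapping \emph{cones} of $\Acal$-acyclic weak equivalences rather than via \emph{fibers}; since the ambient categories here are exact/Waldhausen rather than stable, the fiber $A$ and the cone of $B\to C$ need not represent the same class in $K_0(\Acal)$, so the normalization and sign you have chosen must be reconciled with the conventions implicit in the additivity splitting. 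Second, the phrase ``determinant of the connecting cofibration in $\mathrm{Seq}(\Ccal)$'' is not a standard notion; before one can verify the claimed identification one has to give it a precise meaning, and the Nenashev--Grayson presentation you gesture at would need to be written out (explicit generators and relations for $K_1(\Ccal/\Acal)$, the value of $\partial$ on those generators, and a term-by-term match with the additivity decomposition). As it stands you have a correct plan of attack with the central compatibility deferred, which is a genuine gap.
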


\begin{remark}[Determinantal interpretation]
In the projective $\Ga$-module model, $\partial$ sends an automorphism class
in $K_1(\Ccal/\Acal)$ to the virtual difference of kernel/cokernel classes in $K_0(\Acal)$,
computed in the positional exact structure compatible with $\mu$.
\end{remark}

\subsection{Summary and roadmap}
\label{subsec:loc-summary}
We have established, in the non-commutative $n$-ary $\Ga$-setting:

\begin{itemize}
  \item Waldhausen Fibration and Additivity (Theorems~\ref{thm:waldhausen-fibration}–\ref{thm:additivity}),
  \item Quillen Localization (Theorem~\ref{thm:quillen-localization}) and geometric excision over $\SpecGnC{T}$ (Theorem~\ref{thm:NC-loc-scheme}),
  \item D\'evissage, Approximation, Cofinality, Karoubi invariance (Theorems~\ref{thm:devissage}–\ref{thm:cofinal}),
  \item Bass–Quillen fundamental triangle and Nilpotent invariance (Theorems~\ref{thm:bass}–\ref{thm:nil-invariance}),
  \item Low-degree compatibility with $K_0$ and $K_1$ (Proposition~\ref{prop:boundary-low}).
\end{itemize}

In subsequent work we will leverage these tools to compute $K$-theory in basic families
(matrix/endomorphism models, triangular extensions, and localizations), to prove
derived Morita invariance in the $\Ga$-parametrised setting, and to set up assembly
maps towards a $\Ga$-Farrell–Jones paradigm \cite{FarrellJones93}.

\section{Conclusion}

In this paper we have established the fundamental formal properties of
higher algebraic $K$-theory for the exact and Waldhausen categories
naturally attached to a non-commutative $n$-ary $\Ga$-semiring
$(T,\Ga)$. Working simultaneously with projective $n$-ary $\Ga$-modules,
perfect complexes in the derived category, and perfect quasi-coherent
complexes on the non-commutative $\Ga$-spectrum $\SpecGnC{T}$, we
proved Waldhausen Fibration and Additivity theorems, Quillen
localization, d\'evissage and approximation results, cofinality and
Karoubi invariance, Bass--Quillen fundamental triangles, and
nilpotent invariance for $\Ga$-ideals. We also formulated these results
in geometric form over $\SpecGnC{T}$, obtaining localization and
Mayer--Vietoris sequences for $K$-theory on admissible covers.

These theorems show that the $K$-theory of $(T,\Ga)$ behaves, from a
formal point of view, exactly like the $K$-theory of rings and
schemes, despite the additional non-commutative and $n$-ary structure.
They provide the technical backbone for further developments, including
explicit calculations for matrix and triangular $\Ga$-semirings,
derived Morita invariance, and assembly maps towards a
$\Ga$-parametrised Farrell--Jones conjecture in this setting. Such
applications will be pursued in subsequent work.


\section*{Acknowledgements}

The author expresses sincere gratitude to the Commissioner of Collegiate Education (CCE),
Andhra Pradesh, and to the Principal of Government College (Autonomous), Rajamahendravaram,
for their constant encouragement, academic support, and for providing a conducive
environment for research.

\section*{Funding}

The author did not receive any specific grant from funding agencies in
the public, commercial, or not-for-profit sectors for this research.

\section*{Ethical approval}

This article does not contain any studies with human participants or
animals performed by the author.

\section*{Author contributions}

The author conceived the problem, developed the main results, and wrote
the manuscript. The author has read and approved the final version of
the paper.

\section*{Conflict of interest}

The author declares that there is no conflict of interest.


\end{document}